\newtheorem{theorem}{Theorem}[section]
\newtheorem{lemma}[theorem]{Lemma}
\theoremstyle{definition}     
\newtheorem{example}[theorem]{Example}
\newtheorem{remark}[theorem]{Remark}
\numberwithin{equation}{section}
\def \hd #1 {\bfseries #1  \mdseries}
\def \italic #1 {\bfseries \it #1 \rm \mdseries}
\def \ra {\rightarrow}
\def \cen #1 { \begin{center} #1 \end{center}}
\def \mbz {\mathbb Z}
\def \mbc {\mathbb C}
\def \mbp {\mathbb P}
\def \mco  {\mathcal {O}}
\def \Q {${\mathbb {Q}}\,$}
\def \Pic {{\rm{Pic}}}
\def \rank {{\rm{rank}}}
\def \Sing {{\rm{Sing}}}
\begin{document}
\title
{Calabi--Yau threefolds with non-Gorenstein involutions}

\author{Nam-Hoon Lee}
\address{
Department of Mathematics Education, Hongik University
42-1, Sangsu-Dong, Mapo-Gu, Seoul 121-791, Korea
}
\email{nhlee@hongik.ac.kr}
\address{School of Mathematics, Korea Institute for Advanced Study, Dongdaemun-gu, Seoul 130-722, South Korea }
\email{nhlee@kias.re.kr}
\subjclass[2000]{14J32, 14J50, 14J45}
\keywords{Calabi--Yau threefold, involution, \Q-Fano threefold}
\begin{abstract} The concept of non-Gorenstein involutions on Calabi--Yau threefolds is a higher dimensional generalization of non-symplectic involutions on $K3$ surfaces.
We present some elementary facts about Calabi--Yau threefolds with non-Gorenstein involutions.
We give a classification of the Calabi--Yau threefolds of Picard rank one with non-Gorenstein involutions whose fixed locus is not zero-dimensional.
\end{abstract}
\maketitle
\section{Introduction}
A \emph{Calabi--Yau manifold} is a compact K\"ahler manifold with
trivial canonical class such that the intermediate cohomology groups of
its structure sheaf are trivial  ($h^i (\mco_X) =0$ for $0 < i <
n=\dim (X)$).
If $\rho$ is an involution on $X$, then $\rho^*$ acts as  a multiplication by $+1$ or $-1$ on $H^0(X, \Omega_X^n)$ since $\dim H^0(X, \Omega_X^n) =1$.
In this note, an involution $\rho$ of a Calabi--Yau manifold will be
 said to be
\emph {Gorenstein} if $\rho^*(\omega) = \omega$  for each
$\omega \in  H^0(X, \Omega_X^n)$ and \emph{non-Gorenstein} otherwise.  The non-symplectic involutions on $K3$ surfaces, which are Calabi--Yau twofolds,  are non-Gorenstein  in this definition.
Note that Enriques surfaces, which form an important class of algebraic surfaces,  are  quotients of $K3$ surfaces by  fixed-point-free non-Gorenstein involutions.
Non-Gorenstein (so non-symplectic)  involutions on $K3$ surfaces have been   classified by V.V. Nikulin (\cite{Ni1, Ni2, Ni3, Ni4}) and they  have a remarkable property of the mirror symmetry of lattice polarized $K3$ surfaces, which enabled C.\ Voisin (\cite{Vo}) and C.\ Borcea (\cite{Bo}) to build their mirror pairs of Calabi--Yau threefolds.
They have also been used in producing many new examples of $G_2$-manifolds (\cite{CoHa, KoLe}).
Seeing that there are rich geometries and applications concerning
non-Gorenstein involutions on $K3$ surfaces, it seems natural to consider non-Gorenstein involutions on Calabi--Yau threefolds. In this note, we present some elementary findings about non-Gorenstein involutions on Calabi--Yau threefolds  with some examples.
It turns out that a non-Gorenstein involution is not fixed-point-free and has 16 fixed points if the fixed locus is zero-dimensional (Theorem \ref{thm1}).
So there are no  smooth three-dimensional generalizations of Enriques surfaces.
The next simplest  Calabi--Yau threefolds with non-Gorenstein involutions would be the ones that are of Picard rank one.
We  consider the case that the fixed locus is not zero-dimensional and the Picard rank of Calabi--Yau threefolds are one.
Almost nothing is known for the classification of general Calabi--Yau threefolds, even for the case  of Picard rank one.
However  ones with such non-Gorenstein involutions turn  out to be  closely related with \Q-Fano threefolds with some specific cyclic singularities. With this relation, we show that there are finitely many families of such Calabi--Yau threefolds (Theorem \ref{thm2}) and give a classification of Calabi--Yau threefolds of Picard number one with non-Gorenstein involutions with respect to  $H^3$, $H \cdot c_2$, $N$ and  $s$  (Tables \ref{t1-1}, \ref{t1-2}, \ref{t1-3}) with sharp bounds, where $H$ is an ample generator of the Picard group of the Calabi--Yau threefolds, $c_2$ is the second Chern class, $N$ is the number of isolated fixed points of the involutions and $s$ is the involution index.

\section{Non-Gorenstein involutions on Calabi--Yau threefolds}
Let $X$ be a Calabi--Yau threefold and $\rho$ be an involution on it.
Let $p$ be a fixed point of $\rho$. Borrowing arguments from \cite{Ca}, one can show that there exists a local holomorphic coordinate system $z_1, z_2, z_3$ around $p$ such that
\begin{enumerate}
\item $\rho(z_1, z_2, z_3) = (z_1, z_2, -z_3)$ if  $\rho$ is Gorenstein and
\item $\rho(z_1, z_2, z_3) = (-z_1, -z_2, -z_3)$ or $\rho(z_1, z_2, z_3) =(z_1, z_2, -z_3)$ if $\rho$ is non-Gorenstein.
\end{enumerate}
Hence  the fixed locus  of $\rho $ is a disjoint union of smooth curves if $\rho$ is Gorenstein. When $\rho$ is non-Gorenstein,   its fixed locus is a  union of a set $P$ of isolated points and a disjoint union $S_X$ of smooth surfaces:
$$X^\rho = P \cup S_X.$$

From now on, we assume that $\rho$ is non-Gorenstein.
Let us take a quotient $Y=X/{\langle \rho \rangle}$ of $X$ by $\rho$ and $\varphi: X \ra Y$ be the quotient map.
Then $Y$ is smooth at $\varphi(p)$ for $p \in S_X$. Let $p \in P$, then $Y$ has a singularity of type $\frac{1}{2}(1,1,1)$ at $\varphi(p)$. Since all the singularities are quotient singularities, $Y$ is \Q-factorial. 
Let $P=\{ p_1, p_2, \cdots, p_k\}$ and $q_i = \varphi (p_i)$.
Let $f:\widetilde X \ra X$ be the blow-up at $P$ with exceptional divisors $E_1, E_2, \cdots, E_k$ over $ p_1, p_2, \cdots, p_k$ and $g:\widetilde Y \ra Y$ be the blow-up at $\varphi(P)$ with the exceptional divisors  $F_1, F_2, \cdots, F_k$ over $q_1, q_2, \cdots, q_k$. Note that $F_i \simeq \mbp^2$ and $F_i|_{F_i} = \mco_{F_i}(-2)$.
 $\widetilde Y$ is now smooth.
It is easy to see that $\varphi$ can be extended to a double covering $\tilde \varphi: \widetilde X \ra \widetilde Y$, branched along
$$B=  {S_{\widetilde Y}} \cup  F_1 \cup \cdots \cup F_k,$$
where $S_Y=\varphi(S_X)$ and $S_{\widetilde Y}=g^{-1}(S_Y)$. We have the commutative diagram:

$$
\begin{tikzcd}
\arrow[d, "f"' ] \widetilde X \arrow[r, "\tilde \varphi"] \arrow[d] & \widetilde Y \arrow[d, "g" ] \\
X \arrow[r, "\varphi" ] &  Y
\end{tikzcd}
$$

\begin{lemma}\label{picn} Let $Y=X/{\langle \rho \rangle}$ be the quotient, where $X$ is a Calabi--Yau threefold and $\rho$ is a non-Gorenstein involution. Then
\cen{ $h^{i}(Y, \mco_Y)=0$ for $i>0$ and $\rank ( \Pic(Y) )\leq \rank ( \Pic(X) ).$}
\end{lemma}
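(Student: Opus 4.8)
The plan is to work directly with the degree-two quotient map $\varphi\colon X\to Y$ and the $\rho$-eigensheaf splitting of $\varphi_*\mco_X$, rather than passing to the resolutions. Since $\langle\rho\rangle$ has order $2$, which is invertible, $\varphi_*\mco_X$ decomposes as an $\mco_Y$-module into $\varphi_*\mco_X=\mco_Y\oplus\mcv$, where $\mco_Y=(\varphi_*\mco_X)^{\langle\rho\rangle}$ and $\mcv=(\varphi_*\mco_X)^{-}$ is the anti-invariant part. Because $\varphi$ is finite, hence affine, one gets $H^i(X,\mco_X)=H^i(Y,\varphi_*\mco_X)=H^i(Y,\mco_Y)\oplus H^i(Y,\mcv)$ for all $i$, equivariantly for $\langle\rho\rangle$, with $\rho$ acting trivially on the first summand and by $-1$ on the second; equivalently, $H^i(Y,\mco_Y)$ is the $\rho$-invariant subspace of $H^i(X,\mco_X)$.

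The cohomology vanishing then follows. For $i=1,2$ it is immediate, since $H^i(X,\mco_X)=0$ for $0<i<3$ by the definition of a Calabi--Yau manifold. For $i=3$ I would combine Serre duality on the smooth projective threefold $X$ with the triviality of $\omega_X$ to obtain a $\rho$-equivariant identification of $H^3(X,\mco_X)$ with the dual of the one-dimensional space $H^0(X,\Omega_X^3)$. The non-Gorenstein hypothesis says precisely that $\rho^*$ acts by $-1$ on $H^0(X,\Omega_X^3)$, hence by $-1$ on its dual $H^3(X,\mco_X)$, so its $\rho$-invariant part $H^3(Y,\mco_Y)$ is zero. This $i=3$ step, the only place the hypothesis enters, is the crux of the first assertion; the rest is bookkeeping.

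For the inequality $\rank\Pic(Y)\le\rank\Pic(X)$, I would use the pullback $\varphi^{*}\colon\Pic(Y)\to\Pic(X)$ and show its kernel is torsion, so that $\rank\Pic(Y)=\rank\varphi^{*}(\Pic(Y))\le\rank\Pic(X)$. The shortest route is the norm map: for any finite surjective morphism of degree $2$ between normal varieties there is a homomorphism $N_\varphi\colon\Pic(X)\to\Pic(Y)$ with $N_\varphi\circ\varphi^{*}$ equal to multiplication by $2$, so every $L\in\ker(\varphi^{*})$ satisfies $L^{\otimes2}\cong\mco_Y$. Alternatively, and using the diagram already set up, one can run the argument on the smooth branched double cover $\tilde\varphi\colon\widetilde X\to\widetilde Y$: there $\tilde\varphi_*\mco_{\widetilde X}$ is locally free of rank two and splits off $\mco_{\widetilde Y}$, so the projection formula together with taking determinants shows that $\tilde\varphi^{*}M\cong\mco_{\widetilde X}$ forces $M^{\otimes2}\cong\mco_{\widetilde Y}$; hence $\rank\Pic(\widetilde Y)\le\rank\Pic(\widetilde X)$, which descends to the desired inequality because $f$ raises the Picard number by $k$ (blow-up of $k$ smooth points) and $g$ raises it by $k$ as well (one irreducible exceptional divisor isomorphic to $\mbp^2$ over each of the $k$ cyclic quotient points, $Y$ being \Q-factorial).

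I do not anticipate a serious obstacle; the one point needing care is that $\varphi$ itself is not flat over the isolated fixed points, so $\mcv$ is there only a rank-one reflexive sheaf — the non-Cartier class generating the local divisor class group $\mbz/2$ — which is why the determinant manipulation should be carried out on the smooth model $\widetilde Y$, or bypassed entirely through the norm map.
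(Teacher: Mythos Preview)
Your proof is correct and takes a genuinely different route from the paper. For the cohomology vanishing, the paper passes to the resolutions $\widetilde X\to X$ and $\widetilde Y\to Y$, derives the divisor relation $2K_{\widetilde Y}+S_{\widetilde Y}\sim\sum_i F_i$, identifies the anti-invariant summand of $\tilde\varphi_*\mco_{\widetilde X}$ as $\mco_{\widetilde Y}(-K_{\widetilde Y}-S_{\widetilde Y})$, and then applies Serre duality on $\widetilde Y$ together with an explicit computation of $H^i(F,\mco_F(\sum F_i))$ to obtain the identity $h^i(\widetilde X,\mco_{\widetilde X})=h^i(\widetilde Y,\mco_{\widetilde Y})+h^{3-i}(\widetilde Y,\mco_{\widetilde Y})$. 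You instead read off $H^i(Y,\mco_Y)=H^i(X,\mco_X)^{\rho}$ directly from the eigensheaf splitting on the singular quotient and kill the $i=3$ piece by Serre duality on $X$; this is shorter and isolates exactly where the non-Gorenstein hypothesis enters. For the Picard inequality, the paper invokes the Esnault--Viehweg decomposition to get $\dim H^{1,1}(\widetilde X)\geq\dim H^{1,1}(\widetilde Y)$ and then compares $H^2$ via the exponential sequence on both resolutions, whereas your norm/pushforward argument (equivalently $\varphi_*\varphi^*=2\cdot\id$ on $\Cl(Y)$, using that $Y$ is normal) is elementary and avoids Hodge theory altogether. What the paper's longer computation buys is the relation $2K_{\widetilde Y}+S_{\widetilde Y}\sim\sum_i F_i$ itself, which is reused essentially in the proofs of Theorem~\ref{thm1} and Theorem~\ref{thm2}; your argument, while sufficient for the lemma, does not produce that relation as a byproduct.
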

\begin{proof}

Note
$$K_{\widetilde X} \sim f^*(K_X)+ 2 \sum_i E_i = 2 \sum _i E_i.$$
By the Hurwitz formula, we have
$$K_{\widetilde X} \sim {\tilde \varphi}^{*} \left( K_{\widetilde Y} \right ) +   S_{\widetilde X} +  \sum_i E_i,$$
where $S_{\widetilde X} ={\tilde \varphi}^{-1}(S_{\widetilde Y})$.
Note
$${\tilde \varphi }_* ( K_{\widetilde X} ) \sim 2 K_{\widetilde Y} + S_{\widetilde Y} + \sum_i F_i. $$
But
$$ {\tilde \varphi }_* ( K_{\widetilde X} ) \sim  {\tilde \varphi }_* \left( 2 \sum_i E_i \right) =  2 \sum_i {\tilde \varphi }_* (E_i) = 2 \sum_i F_i.$$
Hence
\begin{align}
\label{eqn2}2 K_{\widetilde Y} +  S_{\widetilde Y} \sim \sum_i F_i.
\end{align}
Let $D = K_{\widetilde Y} +S_{\widetilde Y}$, then
$$2D =  2K_{\widetilde Y} + 2S_{\widetilde Y}  = ( 2K_{\widetilde Y} + S_{\widetilde Y} ) +S_{\widetilde Y} \sim    \sum_{i} F_i+ S_{\widetilde Y},$$
which is the branch locus of the double covering $\tilde \varphi: \widetilde X \ra \widetilde Y$.
So  $${\tilde \varphi}_* (\mco_{\widetilde X} )\simeq  \mco_{\widetilde Y} \oplus \mco_{\widetilde Y}(-D).$$
Accordingly we have
\begin{align*}
H^i(\widetilde X, \mco_{\widetilde X} ) &\simeq H^i(\widetilde Y, {\tilde \varphi}_*(\mco_{\widetilde X}) ) \\
                                          &\simeq  H^i(\widetilde Y, \mco_{\widetilde Y} ) \oplus  H^i(\widetilde Y, \mco_{\widetilde Y}{(-D)} )\\
                                          &= H^i(\widetilde Y, \mco_{\widetilde Y} ) \oplus  H^i(\widetilde Y, \mco_{\widetilde Y}{(-K_{\widetilde Y} -S_{\widetilde Y})} )\\
                                           &= H^i(\widetilde Y, \mco_{\widetilde Y} ) \oplus  H^i(\widetilde Y, \mco_{\widetilde Y}{(2K_{\widetilde Y} +S_{\widetilde Y})} )^*\\
                                          &\simeq H^i (\widetilde Y, \mco_{\widetilde Y} ) \oplus  H^{3-i}\left(\widetilde Y, \mco_{\widetilde Y} {\left( \sum_i F_i \right)} \right)^*. \,\,\,\,\,\,\,\,(\because (\ref{eqn2}))
\end{align*}

Let $F=\cup_i F_i$ and consider the following exact sequence,
$$0 \ra \mco_{\widetilde Y} \ra \mco_{\widetilde Y}\left(\sum_i F_i \right) \ra \mco_{F}\left(\sum_i F_i\right) \ra 0 $$
to induce an exact sequence
$$H^{i-1}\left(F, \mco_{F}\left(\sum_i F_i\right)\right) \ra H^i \left(\widetilde Y, \mco_{\widetilde Y}\right) \ra
H^i \left(\widetilde Y,  \mco_{\widetilde Y}\left(\sum_i F_i\right) \right) \ra  H^{i}\left(F, \mco_{F}\left(\sum_i F_i\right)\right).$$
Note $ \mco_{F}\left(\sum_i F_i\right) \simeq \bigoplus_i  \mco_{F_i}(-2)$ and
$$H^{i}\left(F, \mco_{F}\left(\sum_i F_i\right)\right) \simeq \bigoplus_i H^i\left(F_i, \mco_{F_i}(-2)\right) =0$$
for any $i$, which implies
$$ H^i (\widetilde Y, \mco_{\widetilde Y}) \simeq
H^i \left(\widetilde Y,  \mco_{\widetilde Y}\left(\sum_i F_i\right) \right).$$
Therefore we have

$$H^i(\widetilde X, \mco_{\widetilde X} ) \simeq H^i (\widetilde Y, \mco_{\widetilde Y} ) \oplus  H^{3-i}\left(\widetilde Y, \mco_{\widetilde Y} {\left( \sum_i F_i \right)} \right)^*
\simeq  H^i (\widetilde Y, \mco_{\widetilde Y} ) \oplus  H^{3-i}(\widetilde Y, \mco_{\widetilde Y} )^*  .$$
Accordingly
\begin{align}
\label{eqn1}
h^i(\widetilde X, \mco_{\widetilde X} ) =  h^i (\widetilde Y, \mco_{\widetilde Y} ) +  h^{3-i} (\widetilde Y, \mco_{\widetilde Y} ).
\end{align}
We note
 $$h^p(\widetilde X, \mco_{\widetilde X}) = h^p(X, \mco_{ X}) = 0 $$
 for $p=1, 2$.
By letting $i=0$ in  (\ref{eqn1}), we have  $ h^3 (\widetilde Y, \mco_{\widetilde Y} ) =0$ and by letting $i=1$, we have
$$0=h^1(\widetilde X, \mco_{\widetilde X} ) =  h^1 (\widetilde Y, \mco_{\widetilde Y} ) +  h^2 (\widetilde Y, \mco_{\widetilde Y} ),$$
which implies $h^1 (\widetilde Y, \mco_{\widetilde Y} ) =h^2 (\widetilde Y, \mco_{\widetilde Y} ) =0$

Finally we have
$$h^i(Y, \mco_Y) = h^i (\widetilde Y, \mco_{\widetilde Y} ) =0$$
for $i>0$.

Let $\tilde \rho$ be the involution on $\widetilde X$ that is induced by the involution $\rho$ on $X$. From \cite{EsVi}, one can see

$$H^q(\widetilde X, \Omega^p_{\widetilde X}) \simeq
H^q(\widetilde Y, \tilde \varphi_* \Omega^p_{\widetilde X}) \simeq
H^q(\widetilde Y,  \Omega^p_{\widetilde Y} ) \oplus H^q(\widetilde Y, \Omega_{\widetilde Y}^p (\log B) \otimes L^{-1}),
$$
where $L$ is the anti-invariant part of the direct image of $\mco_{\widetilde X}$ to $\widetilde Y$ under the action by $\tilde \rho$.
Hence
\begin{align}
\label{neweqq} \dim H^2(\widetilde X) = \dim H^1(\widetilde X, \Omega^1_{\widetilde X}) \ge \dim H^1(\widetilde Y,  \Omega^1_{\widetilde Y} ) = \dim H^2(\widetilde Y).
\end{align}
Now we have $H^i(\mco_{\widetilde X})=0$, $H^i(\mco_{\widetilde Y})=0$ for $i=1, 2$. So by the exponential sequence, we have $\Pic(X) \simeq H^2(X, \mbz)$, $\Pic(\widetilde X) \simeq H^2(\widetilde X, \mbz)$, $\Pic(\widetilde Y) \simeq H^2(\widetilde Y, \mbz)$ and
\begin{align*}
\rank (\Pic(Y)) &= \rank ( \Pic(\widetilde Y)) - k \\
            &=  \dim H^2(\widetilde Y) - k \\
            &\leq \dim H^2(\widetilde X) - k  \,\,\,\,\,\,\,\, (\because \,\,\, \ref{neweqq}) \\
            &= \dim H^2(X)+k - k \\
            &= \rank ( \Pic (X)).
\end{align*}
i.e.\ $\rank ( \Pic(Y) ) \leq \rank ( \Pic(X)).$

\end{proof}

Let us consider some examples.

\begin{example}\label{enr} Let $X$ be the intersection of the Fermat quadric and the Fermat quartic in $\mbp^5$. Then $X$ is a Calabi--Yau threefold.
Define an involution $\rho$ by
$$(x_0, x_1, x_2, x_3, x_4, x_5) \mapsto (-x_0, -x_1, -x_2, x_3, x_4, x_5).$$
Its fixed locus is composed of 16 isolated points --- so zero-dimensional. Hence $\rho$ is non-Gorenstein (see Remark \ref{rem26}).
The quotient  $Y$ has 16 singularities of type $\frac{1}{2}(1,1,1)$.
\end{example}
\begin{example} \label{enr2}
There is a $K3$ surface $S$ with two commuting involutions $\sigma_1, \sigma_2 $ such that $\sigma_1$ is a fixed point free non-symplectic involution and $\sigma_2$ is a symplectic involution (e.g. V.\ 23 in \cite{BaHu}). Note that $\sigma_2$ has 8 fixed points.
Let $E = \mbc/(\mbz+\tau\mbz)$ be an elliptic curve with period $\tau$ and define involutions  $\theta_1, \theta_2$ of $E$ by
$$\theta_1(x) = -x, \theta_2(x) = \frac{\tau}{2}-x$$
respectively. Then the involution $\sigma_1 \times \theta_1 $ on $S \times E$ is fixed-point-free and the quotient $X:=(S\times E) / \langle \sigma_1 \times \theta_1 \rangle$ is a Calabi--Yau threefold with infinite fundamental group.
The involution $\sigma_2 \times \theta_2$ on $S \times E$ induces an  involution $\rho$ on $X$ with 16 fixed points and the quotient $Y$ has 16 singularities of type $\frac{1}{2}(1,1,1)$. Note that $\rho$ is non-Gorenstein.
\end{example}

Let us note a simple fact about topology:
\begin{lemma} \label{top}
Let $Z$ be a topological space, $q_1,q_2, \cdots, q_k$ be points on it and $Z^* = Z - \{ q_1,q_2, \cdots, q_k\}$.
If each point $q_i$ has a simply-connected neighborhood $U_i$ such that $U_i-\{q_i\}$ is open and path-connected, then the group homomorphism between fundamental groups:
$$\pi_1(Z^*) \ra \pi_1(Z),$$
which is induced by the inclusion $Z^* \hookrightarrow Z$, is surjective.
\end{lemma}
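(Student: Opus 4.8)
I would prove the equivalent assertion: for every $z_0\in Z^*$ and every loop $\gamma\colon[0,1]\to Z$ based at $z_0$, there is a loop in $Z^*$ based at $z_0$ that is homotopic to $\gamma$ rel $z_0$ inside $Z$. Morally this is Seifert--van Kampen for the covering of $Z$ by $Z^*$ together with the $U_i$: each $U_i$ is simply connected and $U_i\cap Z^*=U_i-\{q_i\}$ is path-connected, so $Z$ is obtained from $Z^*$ by attaching simply-connected pieces and $\pi_1$ cannot grow. Since the $U_i$ are only assumed to be neighborhoods and $Z^*$ need not be open, it is cleanest to make this precise by hand with loops, and I would first reduce to the case $k=1$. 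After shrinking the $U_i$ so that they are pairwise disjoint --- possible in all situations where the lemma is applied, where the $q_i$ are isolated points admitting arbitrarily small cone-like neighborhoods satisfying the same three conditions --- one deletes $q_1,\dots,q_k$ one at a time: at the step replacing $Z\setminus\{q_1,\dots,q_{j-1}\}$ by $Z\setminus\{q_1,\dots,q_j\}$ the set $U_j$ is untouched (as $q_1,\dots,q_{j-1}\notin U_j$) and still satisfies all three hypotheses, and a composite of surjections is surjective. So assume $k=1$ and write $q=q_1$, $U=U_1$.

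Given a loop $\gamma$ at $z_0\neq q$, set $T=\gamma^{-1}(q)$; this is compact and contains neither $0$ nor $1$. Choose an open $V$ with $q\in V\subseteq U$. Then $T$ is a compact subset of the open set $W:=\gamma^{-1}(V)\setminus\{0,1\}\subseteq(0,1)$, so I can choose a finite union $O=J_1\cup\cdots\cup J_r$ of open intervals with $T\subseteq O$ and $\overline O\subseteq W$. Let $C_1,\dots,C_N$ be the connected components of $O$ (finitely many, each an open interval $C_l=(\alpha_l,\beta_l)$). For each $l$ one has $\overline{C_l}\subseteq\overline O\subseteq W\subseteq(0,1)$, hence $\gamma([\alpha_l,\beta_l])\subseteq V\subseteq U$; and since $\alpha_l,\beta_l$ lie outside $O\supseteq T$, both $\gamma(\alpha_l)$ and $\gamma(\beta_l)$ lie in $U-\{q\}$. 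Finally, because $T\subseteq O$, the map $\gamma$ sends $[0,1]\setminus O$ into $Z-\{q\}$.

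Now for each $l$, path-connectedness of $U-\{q\}$ provides a path $\mu_l$ in $U-\{q\}$ from $\gamma(\alpha_l)$ to $\gamma(\beta_l)$, and simple connectedness of $U$ makes $\gamma|_{[\alpha_l,\beta_l]}$ and $\mu_l$ homotopic rel endpoints inside $U$. Gluing these homotopies over the closed sets $\overline{C_l}$ while keeping $\gamma$ fixed on the closed set $[0,1]\setminus O$ yields, via the pasting lemma, a homotopy in $Z$ rel $z_0$ (note $0,1\notin O$) from $\gamma$ to the loop $\gamma'$ that equals $\gamma$ off $O$ and equals $\mu_l$ on each $\overline{C_l}$. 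The loop $\gamma'$ misses $q$, so it lies in $Z^*$, and $[\gamma]=[\gamma']$ in $\pi_1(Z,z_0)$; thus $[\gamma]$ is in the image of $\pi_1(Z^*,z_0)\to\pi_1(Z,z_0)$, proving surjectivity.

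The one place that genuinely needs care is the reduction to $k=1$. For a wholly arbitrary topological space one would instead treat all $k$ points simultaneously, and then a component of the interval-neighborhood $O$ of $T$ can be mapped by $\gamma$ into a union of several of the $U_i$; to push it off $\{q_1,\dots,q_k\}$ as above one would want, say, each $U_i-\{q_1,\dots,q_k\}$ to be path-connected. In the applications in this paper ($Z$ a quotient of a Calabi--Yau threefold, the $q_i$ its isolated quotient-singular points) the $U_i$ are small pairwise-disjoint neighborhoods and no such issue arises, so the argument above applies.
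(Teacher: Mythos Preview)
Your argument is correct and rests on the same inductive idea as the paper---treat the points one at a time---but executes the single-point step differently. The paper simply invokes the Seifert--Van Kampen theorem at each stage (writing $Z_{i+1}=Z_i\cup U_{i+1}$ with $U_{i+1}$ simply connected and $Z_i\cap U_{i+1}=U_{i+1}-\{q_{i+1}\}$ path-connected, so $\pi_1(Z_i)\to\pi_1(Z_{i+1})$ is onto), while you unpack the loop-pushing by hand: cover $\gamma^{-1}(q)$ by finitely many intervals landing in $U$, reroute each through $U-\{q\}$, and glue. Your version buys you freedom from checking the standing hypotheses of whatever form of Seifert--Van Kampen is being cited (openness of the pieces, etc.) in a bare topological space; the paper's version is much shorter. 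Both proofs share the same tacit assumption you explicitly flag---that $q_j\notin U_i$ for $i\neq j$---since the paper's equality $Z_i\cap U_{i+1}=U_{i+1}-\{q_{i+1}\}$ also fails without it; as you note, this is harmless in the applications here.
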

\begin{proof}
Let $Z_0 = Z^*$ and $Z_i=Z^* \cup \{q_1,q_2, \cdots, q_i \}$, then $Z_k=Z$.

We apply the Seifert--Van Kampen theorem to $Z_{i+1} = Z_i \cup U_{i+1}$, noting that $Z_i \cap  U_{i+1} = U_{i+1}-\{q_{i+1}\}$. Since  $U_{i+1}$ is simply-connected, the homomorphism
$$ \psi_{i+1}:\pi_1(Z_i) \ra \pi_1(Z_{i+1}),$$
 induced by the inclusion  $Z_i \hookrightarrow Z_{i+1}$, is surjective. Since the homomorphism $\pi_1(Z^*) \ra \pi_1(Z)$ is the composition $$\psi_{k}\circ \psi_{k-1} \circ \cdots \circ   \psi_0,$$
it is surjective.
\end{proof}

The same number 16 (the number of fixed points) in Example \ref{enr}, \ref{enr2} is not accidental:
\begin{theorem} \label{thm1}
Let $\rho$ be a non-Gorenstein involution on a Calabi--Yau threefold $X$.
Then it is not fixed-point-free. Assume that  its fixed locus is zero-dimensional. Then  the number of fixed points is 16. Furthermore, if $X$ has finite fundamental group, then the quotient $Y = X/\langle \rho \rangle$ is simply-connected.
\end{theorem}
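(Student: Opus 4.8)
The plan is to prove the three assertions in order: the first two follow almost at once from the holomorphic Lefschetz fixed point formula, and the simply-connectedness is then obtained by a covering-space argument that feeds on the count of $16$ fixed points.

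For the first two assertions I would apply the Atiyah--Bott holomorphic Lefschetz fixed point formula to $\rho$ acting on the structure sheaf $\mco_X$. Since $X$ is a Calabi--Yau threefold, $H^0(\mco_X)=\mbc$ with $\rho^*=\id$, $H^1(\mco_X)=H^2(\mco_X)=0$, and $H^3(\mco_X)\cong H^0(X,\Omega_X^3)^{\vee}$ by Serre duality, on which $\rho^*$ acts by $-1$ because $\rho$ is non-Gorenstein; hence the holomorphic Lefschetz number $\sum_i(-1)^i\mathrm{tr}\bigl(\rho^*\mid H^i(\mco_X)\bigr)$ equals $1-0+0-(-1)=2$. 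Because $X^\rho$ is a disjoint union of isolated points and smooth surfaces, the formula expresses $2$ as a sum of local contributions; if $X^\rho$ were empty this sum would be $0\ne2$, so $\rho$ is not fixed-point-free. If in addition $X^\rho$ is zero-dimensional, then by the local normal forms recalled above every fixed point has type $(-1,-1,-1)$ (a fixed point of type $(z_1,z_2,-z_3)$ would give a surface in $X^\rho$), and the $\mco_X$-contribution of such a point is $1/\det_{\mbc}(\id-d\rho_p)=1/2^3=1/8$; comparing with $2$ yields exactly $16$ fixed points.

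For the third assertion, assume $X^\rho$ is zero-dimensional, so $\mathrm{Fix}(\rho)=\{p_1,\dots,p_{16}\}$, and $\Gamma:=\pi_1(X)$ is finite. I would pass to the universal cover $u\colon\widehat X\to X$, on which $\Gamma$ acts freely. First one checks $\widehat X$ is again a Calabi--Yau threefold: $h^{0,1}(\widehat X)=\tfrac12 b_1(\widehat X)=0$, $\omega_{\widehat X}=u^*\omega_X$ is trivial (so $h^{0,3}(\widehat X)=1$), and $h^{0,2}(\widehat X)=\chi(\mco_{\widehat X})=|\Gamma|\,\chi(\mco_X)=0$. As $\widehat X$ is simply connected, $\rho$ lifts; the lifts of $\rho$ together with $\Gamma$ form a finite group $G$ with $1\to\Gamma\to G\to\mbz/2\to1$ and $\widehat X/G=X/\langle\rho\rangle=Y$, and $G\setminus\Gamma$ is exactly the set of the $|\Gamma|$ lifts of $\rho$. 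For such a lift $\ell$ one has $\ell^2\in\Gamma$; if $\ell$ has a fixed point then $\ell^2=1$ (nontrivial deck transformations are fixed-point-free), $\ell$ is non-Gorenstein (same computation as for $\rho$, using $\omega_{\widehat X}=u^*\omega_X$), and its fixed locus lies in the finite set $u^{-1}(\mathrm{Fix}(\rho))$ and is therefore zero-dimensional; so by the first two assertions applied to $(\widehat X,\ell)$ it consists of exactly $16$ points.

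The crux --- and the step I expect to be the main obstacle --- is the counting observation that \emph{every} lift of $\rho$ has a fixed point. Over each $p_j$ the fiber $u^{-1}(p_j)$ is a $\Gamma$-torsor on which $G$ acts transitively (already $\Gamma$ does) with all stabilizers of order $|G|/|\Gamma|=2$, and the nontrivial stabilizer of a point $\hat p$ is the unique lift of $\rho$ fixing $\hat p$. Since every fixed point of every lift lies in $u^{-1}(\mathrm{Fix}(\rho))$, a set of $16|\Gamma|$ points, summing gives $\sum_{\ell\in G\setminus\Gamma}|\mathrm{Fix}_{\widehat X}(\ell)|=16|\Gamma|$; as each nonempty $\mathrm{Fix}(\ell)$ has exactly $16$ elements, exactly $|\Gamma|$ lifts --- i.e.\ all of them --- have a fixed point. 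Now I would invoke Armstrong's theorem on the fundamental group of an orbit space: for the action of the finite group $G$ on the simply-connected manifold $\widehat X$, $\pi_1(Y)=\pi_1(\widehat X/G)\cong G/H$, where $H$ is the normal subgroup generated by the elements of $G$ having a fixed point. We have just shown $H\supseteq G\setminus\Gamma$, and $G\setminus\Gamma$ generates $G$ (a product of two lifts of $\rho$ runs over all of $\Gamma$); hence $H=G$ and $\pi_1(Y)=1$. If one prefers not to cite Armstrong's theorem, the same conclusion follows from Lemma \ref{top} --- which provides $\pi_1(X\setminus\mathrm{Fix}(\rho))\xrightarrow{\ \sim\ }\pi_1(X)$ and a surjection $\pi_1(Y\setminus\{q_i\})\twoheadrightarrow\pi_1(Y)$ --- together with the \'etale double cover $X\setminus\mathrm{Fix}(\rho)\to Y\setminus\{q_i\}$ and van Kampen, after the same counting: in either approach the point is that the single number ``$16$'' for $\rho$ becomes, on $\widehat X$, the assertion that all $|\Gamma|$ lifts of $\rho$ are ramified, and this is exactly what forces $H$ to exhaust $G$.
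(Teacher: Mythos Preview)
Your argument is correct, but both halves take a genuinely different route from the paper's.

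For the non-freeness and the count of $16$, the paper works on the quotient side: Lemma~\ref{picn} gives $\chi(\mco_Y)=1$, which already rules out a free action, and then a Riemann--Roch computation on the resolution $\widetilde Y$, using $c_1(\widetilde Y)=-\tfrac12\sum_iF_i$ from (\ref{eqn2}) and $c_2(\widetilde Y)|_{F_i}=-3$, yields $\chi(\mco_{\widetilde Y})=k/16$. Your direct appeal to the holomorphic Lefschetz formula on $X$ is more economical and bypasses the blow-up entirely; the paper's route pays off only because the auxiliary objects ($\widetilde Y$, (\ref{eqn2}), Lemma~\ref{picn}) are needed elsewhere anyway.

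For simple connectedness the paper goes in the opposite direction from you: it first shows $\pi_1(Y)$ is finite, passes to the universal cover $\hat Y\to Y$ of degree $d$ (so $\hat Y$ has $16d$ singularities), and then invokes an external construction (Theorem~1.1 of \cite{Lee1}) to build a Calabi--Yau double cover $\hat X\to\hat Y$; applying the $16$-point count to \emph{that} non-Gorenstein involution forces $16d=16$. Your approach---lifting to the universal cover $\widehat X$ of $X$, using the count $\sum_\ell|\mathrm{Fix}(\ell)|=16|\Gamma|$ to show every lift of $\rho$ is ramified, and finishing with Armstrong's theorem---is more self-contained and avoids the dependence on \cite{Lee1}, at the price of citing Armstrong instead. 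Both proofs ultimately rest on the same numerical rigidity: the number $16$ survives passage to any finite cover, which pins the relevant covering degree to $1$.
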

\begin{proof}
By Lemma \ref{picn}, $\chi(Y, \mco_Y) = 1-0+0=1$. Suppose that $\rho$ is fixed-point-free, then  $2 \chi(Y,\mco_{Y} ) = \chi(X, \mco_X)=0$, which is a contradiction. So $\rho$ is not fixed-point-free.
Now assume that  its fixed locus is zero-dimensional and let $k$ be the number of the fixed points.
Note that $\chi(\widetilde Y, \mco_{\widetilde Y}) = \chi(Y, \mco_Y)=1$. By the Riemann--Roch theorem, we have
$$\chi(\widetilde Y, \mco_{\widetilde Y})  = \frac{1}{24} c_1 ( \widetilde Y ) \cdot c_2( \widetilde Y ).$$
By adjunction,
$$c( \widetilde Y )|_{F_i} = (1+F_i|_{F_i} ) \cdot c(F_i) \text{ in } \bigoplus_{k=0}^2 H^{2k}(F_i, \mbz)$$
and we have
$$ c_2 ( \widetilde Y )|_{F_i} = F_i|_{F_i} \cdot c_1(F_i) + c_2 (F_i) = -6+3=-3.$$
From  (\ref{eqn2}),
$c_1(\widetilde Y)  = -\frac{1}{2}\sum_i F_i$ and we have
\begin{align*}
\chi(\widetilde Y, \mco_{\widetilde Y})  &= \frac{1}{24} c_1 ( \widetilde Y ) \cdot c_2( \widetilde Y )\\
                    &=\frac{1}{24} \left ( -\frac{1}{2}\sum_i F_i \right ) \cdot   c_2( \widetilde Y )\\
                    &=-\frac{1}{48} \left ( \sum_i  c_2( \widetilde Y )|_{F_i} \right )  \\
                    &= -\frac{1}{48} (-3 k)\\
                    &= \frac{k}{16}.
\end{align*}

So we have $k=16$.

Now suppose that $X$ has a finite fundamental group $\pi_1(X)$.
Let
\cen{$X^*= X - \{p_1, p_2, \cdots, p_{16} \}$ and $Y^*= Y - \{q_1, q_2, \cdots, q_{16} \}$,}
where $p_i$'s are the fixed points of $\rho$ and $q_i$'s are the image of $p_i$'s in $Y$ respectively.
Applying  the Seifert--Van Kampen theorem repeatedly as in the proof of Lemma \ref{top}, one can show  $\pi_1(X^*) \simeq \pi_1(X)$ --- one can take $U_i$ so that    $U_i - \{p_i \}$ is simply-connected. Hence  $\pi_1(X^*)$ is also finite.
By Lemma \ref{top}, we know that the homomorphism $\pi_1(Y^*) \ra \pi(Y)$ is surjective. So $\pi_1(Y)$ is also finite and  $Y$ has a projective universal covering $\hat Y$.
Note that $2K_{\hat Y} =0$ and $\hat Y$ has $16 d$ singularities, where $d$ is the degree of the covering $\hat Y \ra Y$.
Let $\hat Y^* = \hat Y - \Sing(\hat Y)$, where $\Sing(\hat Y)$ is the set of the singularities of $\hat Y$.
By Theorem 1.1 in \cite{Lee1} (its proof works for the case that $-2K_Y=0$ and $S = \emptyset$), there is a smooth threefold $\hat X$ with trivial canonical class such that there is a double covering $\hat \varphi: \hat X \ra  \hat Y,$
 branched at $\Sing (\hat Y)$.
Let $\hat X^* = {\hat \varphi}^{-1}(\hat Y^*)$, then  $\hat X^* \ra \hat Y^*$ is  an unbranched covering. Thus $\pi_1(\hat X^*)$ is finite and so is $\pi_1(\hat X)$ since $\pi_1(\hat X^*) \simeq \pi_1(\hat X)$.
Hence $h^1(\hat X, \mco_{\hat X})=0$. Using
$$H^2(\hat X, \mco_{\hat X}) \simeq H^1(\hat X, \mco_{\hat X}(K_{\hat X}))^* = H^1(\hat X, \mco_{\hat X})^*,$$
 we have $h^2(\hat X, \mco_{\hat X})= h^1(\hat X, \mco_{\hat X})=0$ and we conclude that $\hat X$ is also a Calabi--Yau threefold. The covering transformation of $\hat X \ra \hat Y$ is a non-Gorenstein involution on $\hat X$ and so $\hat Y$ has 16  singularities. Therefore we have $16d=16$, i.e.\ $d=1$, which implies that $\hat Y \ra Y$ is an isomorphism and so that  $Y$ is simply-connected.

\end{proof}
If $X$ has an infinite fundamental group, then $Y$ may not be simply-connected. In  Example \ref{enr2}, let $\hat Y = (S\times E) /  \langle \sigma_2 \times \theta_2 \rangle$, then it is easy to see that there is a unbranched double covering $\hat Y \ra Y$. So $Y$ is not simply-connected.

There are fixed-point-free Gorenstein involutions on Calabi-Yau threefolds (see, for example, \cite{BiFa, BoVi}). Hence, by Theorem \ref{thm1}, we  make the following remark.
\begin{remark} \label{rem26}
An involution on a Calabi--Yau threefold is Gorenstein if and only if it is fixed-point-free or its fixed locus is one-dimensional.
\end{remark}

Now let us consider some examples where the involutions have two-dimensional fixed locus.

\begin{example}
Let $\zeta = e^{2 \pi \sqrt{-1}/3} $ and  $E_\zeta$ the elliptic curve whose period is $\zeta$.
Let $\overline X = E_\zeta^3 / \langle \zeta \rangle$ be the quotient of $E_\zeta^3$ by the scalar multiplication by $\zeta$.
Then $\overline X $ has 27 singularities of type $\frac{1}{3}(1,1,1)$. Blow up $ \overline X$  at those singularities to get a smooth threefold $X$. It is known that $X$ is a Calabi--Yau threefold $X$ (\cite {Be}).
Let $\bar 0 \in \overline X$ be the image of $(0,0,0) \in E_\zeta^3$ and $G$ be the exceptional divisor over $\bar 0$ in the blow-up $X \ra \overline X$. Then $G \simeq \mbp^2$.
The scalar multiplication on $E_\zeta^3$ by $-1$ induces a non-Gorenstein involution $\rho$ on $X$. Then $X^\rho = G \cup P$, where $P$ is a set of $31$ points.
\end{example}

\begin{example}  \label{weigh}
 Consider smooth the threefold $X$ in a weighted projective space $ \mbp (1, 1, 1, 2, 5)$ with the equation
$$x^{10} + y^{10} + z^{10} + w^5 = t^2,$$
where $x$, $y$, $z$, $w$ and $t$ are homogeneous coordinates of weights 1,
1, 1, 2 and 5 respectively. Then $X$ is a Calabi--Yau threefold. Define an involution $\rho$ by $(x,y,z,w,t)\mapsto (x,y,z,w,-t)$.
Then it is a non-Gorenstein involution and its fixed locus is:
$$X^\rho = \{(0,0,0,0,1)\} \cup S_X,$$
where $S_X=\{ (x,y,z,w, t) \in X | x^{10} + y^{10} + z^{10} + w^5 = 0 \}.$
It is not hard to see that the quotient $Y$ is isomorphic to $\mbp  (1, 1, 1, 2)$ and the quotient map is the projection:
$$\varphi: X \ra  \mbp  (1, 1, 1, 2),$$
$$ (x, y, z, w, t) \mapsto  (x, y, z, w).$$
Note that $Y=\mbp (1, 1, 1, 2)$ has a
singularity of type $\frac{1}{2} (1, 1, 1)$ at $ (0, 0, 0, 1)$.
Note that $\mbp (1, 1, 1, 2)$ is a \Q-Fano threefold.
\end{example}
We call a \Q-Factorial threefold  \Q-Fano when it has at worst terminal singularities and its anticanonical class is ample.
Classifying general Calabi--Yau threefolds with non-Gorenstein involutions seems to be out of reach for now.
In the case of Picard rank one, one can get a boundedness result  thanks to that of \Q-Fano threefolds of Picard rank one.
\begin{theorem} \label{thm2}
There are finitely many families of Calabi--Yau threefolds of Picard rank one that have  non-Gorenstein involutions with two-dimensional fixed locus.
\end{theorem}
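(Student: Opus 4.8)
The plan is to identify the quotient $Y = X/\langle \rho \rangle$ with a \Q-Fano threefold of Picard rank one whose only singularities are of type $\frac{1}{2}(1,1,1)$, and then to invoke the boundedness of such threefolds. First I would use Lemma \ref{picn}: it gives $h^i(Y,\mco_Y)=0$ for $i>0$ and $\rank(\Pic(Y))\le \rank(\Pic(X))=1$; since $Y$ is a projective threefold it carries an ample divisor, so in fact $\rank(\Pic(Y))=1$. As observed in Section 2, $Y$ is \Q-factorial and its singular locus consists exactly of the points $q_i=\varphi(p_i)$, each a $\frac{1}{2}(1,1,1)$ quotient singularity, which is terminal (e.g.\ by the Reid--Tai criterion). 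So the one point left to check is that $-K_Y$ is ample.

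For this I would compute the class of the branch surface. Keeping the notation of Section 2, the isolated fixed points $p_i$ do not lie on $S_X$, so $S_Y=\varphi(S_X)$ misses the singular points $q_i$ of $Y$; hence its strict transform on $\widetilde Y$ equals $g^*S_Y$, while $K_{\widetilde Y}=g^*K_Y+\tfrac12\sum_iF_i$ (the discrepancy of a $\frac12(1,1,1)$ point being $\tfrac12$). Substituting these into (\ref{eqn2}) and applying $g_*$ yields
$$S_Y\sim -2K_Y$$
(as a check, in Example \ref{weigh} the branch surface has degree $10$ and $-2K_{\mbp(1,1,1,2)}=\mco(10)$). Since the fixed locus is assumed two-dimensional, $S_X$ — hence $S_Y$ — is nonzero, and a nonzero effective $\mbq$-Cartier divisor on a projective threefold of Picard rank one is ample, being a positive multiple of the ample generator; therefore $-2K_Y$, and so $-K_Y$, is ample, and $Y$ is a \Q-Fano threefold of Picard rank one.

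Next I would invoke the boundedness of \Q-Fano threefolds of Picard rank one with terminal singularities (Kawamata; this also follows from Birkar's theorem on the BAB conjecture). Hence the threefolds $Y$ that occur form a bounded family, and in particular fall into finitely many deformation families. Finally, $X$ is reconstructed from $Y$ as the double cover determined by the section of $\mco_Y(-2K_Y)$ cutting out $S_Y\in|-2K_Y|$, ramified in addition over the singular points $q_i$ — this is precisely the construction of \cite{Lee1} that smooths the $\frac12(1,1,1)$ points. Over a bounded family of $Y$'s the dimension of $H^0(Y,\mco_Y(-2K_Y))$ is bounded, so the pairs $(Y,S_Y)$ vary in a bounded family, the cover depends algebraically on this data, and the $X$'s therefore form the image of a morphism from a bounded family — hence lie in finitely many families. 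I expect the substantive input to be the boundedness of \Q-Fano threefolds of Picard rank one; recognizing $Y$ as \Q-Fano and controlling the branched cover are comparatively formal, the only real subtlety being that the cover must be taken with respect to the reflexive sheaf $\omega_Y$ (equivalently the Weil divisor class $-K_Y$) rather than a genuine Cartier square root of $S_Y$.
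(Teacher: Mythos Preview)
Your proposal is correct and follows essentially the same route as the paper: use Lemma \ref{picn} to pin $\rank(\Pic(Y))=1$, deduce from (\ref{eqn2}) that $S_Y\sim -2K_Y$ (the paper writes this as $g^*(-2K_Y)=S_{\widetilde Y}$ and intersects with $g^*(H'^2)$ rather than invoking ``effective on Picard rank one $\Rightarrow$ ample'', but the content is the same), conclude $Y$ is \Q-Fano with only $\tfrac12(1,1,1)$ points, and finish with Kawamata's boundedness plus the double-cover construction of \cite{Lee1,Lee2,Lee3}. Your added discussion of why boundedness of the $Y$'s propagates to the $X$'s via the linear system $|-2K_Y|$ is a bit more explicit than the paper's one-line citation, but is in the same spirit.
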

\begin{proof}
Let $X$ be a Calabi--Yau threefold of Picard rank one that has a non-Gorenstein involution $\rho$ with two-dimensional fixed locus and $Y = X / \langle \rho \rangle$.
By Lemma \ref{picn}, $\rank(\Pic(Y)) \le 1$ and so $\rank(\Pic(Y))=1$.
We note that $2K_{\widetilde Y} = g^*(2K_Y)+\sum_i F_i.$
By (\ref{eqn2}), $g^*(-2K_Y) = S_{\widetilde Y}$. Let $H'$ be an ample divisor of $Y$.
Since $S_{\widetilde Y} \neq \emptyset$ and it is disjoint from the exceptional divisors $F_i$'s, we have
$$H'^2 \cdot (-2K_Y) =  g^*(H'^2 )\cdot g^*(-2K_Y) = g^*(H'^2 )\cdot S_{\widetilde Y} >0.$$
Since $Y$ has Picard rank one, $-2K_Y$ is an ample divisor  and so $Y$ is a \Q-Fano threefold.
Conversely a \Q-Fano threefold of Picard rank one with singularities of type only $\frac{1}{2}(1,1,1)$ has a Calabi--Yau threefold of Picard rank one as its double covering (\cite{Lee1, Lee2, Lee3}).
Since there are finitely many families of \Q-Fano threefolds of Picard rank one (\cite{Ka}),
there are finitely many families of Calabi--Yau threefolds of Picard rank one that have  non-Gorenstein involutions with two-dimensional fixed locus.

  \end{proof}

We remark that, for some   Calabi--Yau threefold $X$ of Picard rank one, there may exist  two non-Gorenstein involutions $\rho_1, \rho_2$ on $X$ with two-dimensional fixed locus such that the pairs $(X, \rho_1)$, $(X, \rho_2)$ are not deformation equivalent. Such an example is given in \cite{Lee3} (see comments before Table \ref{t1-2} also).

As shown in the proof of Theorem \ref{thm2}, the classification of Calabi--Yau threefolds of Picard rank one that have  non-Gorenstein involutions with two-dimensional fixed locus can be deduced from that of \Q-Fano threefolds of Picard rank one with  singularities of type $\frac{1}{2}(1,1,1)$.
The \Q-Fano threefolds of Picard rank one with  singularities of type $\frac{1}{2}(1,1,1)$ have been classified in several papers (\cite{Ba, CaFl, Sa1, Sa2, Ta}). Let $Y$ be a \Q-Fano threefold of Picard rank one with $N$ singularities of type $\frac{1}{2}(1,1,1)$.

With the known classifications of \Q-Fano threefolds, we give a classification of Calabi--Yau threefolds $X$'s of Picard number one with non-Gorenstein involutions $\rho$'s whose fixed locus is not zero-dimensional;  the
classification is given  with respect to numerical invariants $H^3$, $H \cdot c_2$, $e$, $N$ and involution index $s$ in Tables \ref{t1-1}, \ref{t1-2} and \ref{t1-3}, where
\begin{enumerate}
\item  $H$ is an ample generator of the Picard group of the Calabi--Yau threefold $X$,
\item $c_2$ is the second Chern class of $X$,
\item $e$ is the topological Euler characteristic of $X$,
\item $N$ is the number of isolated fixed points of $\rho$ and
\item $s$: let $S_X$ be the two-dimensional fixed locus of $\rho$, then $S_X$ is numerically equivalent to $s H$ for some positive integer $s$.
\end{enumerate}
These invariants can be easily calculated from those of \Q-Fano threefolds, whose formulas are given in \cite{Lee1, Lee2, Lee3}. For example, $H^3 = 2 |H'^3|$, where $H'$ is a generator of the class group of $Y$.

If $N=0$, then $Y$ is a smooth Fano threefold of Picard number one whose classification is classical. Their invariants are listed in Table \ref{t1-1}.

\begin{center}
\begin{longtable}{ccccl|l}
\caption{ Invariant of $X$'s when $Y$'s are smooth Fano threefolds}
\label{t1-1} \\
\hline
$N$ & $s$ & $H^3$ & $H \cdot c_2$ & $e$            & references\\
\hline
\endfirsthead
\multicolumn{4}{c}%
{\tablename\ \thetable\ -- \textit{Continued from previous page}} \\
\hline
 $N$ & $s$ & $H^3$ & $H \cdot c_2$ & $e$            & references    \\
\hline
\endhead
\hline \multicolumn{4}{r}{\textit{Continued on next page}} \\
\endfoot
\hline
\endlastfoot
0 & 2 & 4                   & 52     & -256         &\cite{IsPr}                                        \\
0 & 2 & 8                   & 56     & -176         &\cite{IsPr}                                        \\
0 & 2 & 12                  & 60     & -144         &\cite{IsPr}                                       \\
0 & 2 & 16                  & 64     & -128         &\cite{IsPr}                                        \\
0 & 2 & 20                  & 68     & -120         &\cite{IsPr}                                        \\
0 & 2 & 24                  & 72     & -116         &\cite{IsPr}                                        \\
0 & 2 & 28                  & 76     & -116         &\cite{IsPr}                                        \\
0 & 2 & 32                  & 80     & -116         &\cite{IsPr}                                        \\
0 & 2 & 36                  & 84     & -120         &\cite{IsPr}                                        \\
0 & 2 & 44                  & 92     & -128         &\cite{IsPr}                                        \\
0 & 4 & 2                   & 32     & -156         &\cite{IsPr}                                        \\
0 & 4 & 4                   & 40     & -144         &\cite{IsPr}                                        \\
0 & 4 & 6                   & 48     & -156         &\cite{IsPr}                                        \\
0 & 4 & 8                   & 56     & -176         &\cite{IsPr}                                        \\
0 & 4 & 10                  & 64     & -200         &\cite{IsPr}                                        \\
0 & 6 & 4                   & 52     & -256         &\cite{IsPr}                                        \\
0 & 8 & 2                   & 44     & -296         &\cite{IsPr}                                        \\
\end{longtable}
\end{center}

In case of $N>0$, let $-2K_Y = r_Y H_Y$, where $H_Y$ is a primitive Cartier divisor and $r_Y \in \mathbb N$. The number $\frac{r_Y}{2}$ is called the Fano index of $Y$ and denoted by $F_Y$.

If the Fano index is greater than one, there is one single example which is $\mbp(1,1,1,2)$ (\cite{ CaFl, Sa2}). We note that it is a quotient of a degree $10$ Calabi--Yau hypersurface in $\mbp(1,1,1,2,5)$ by a non-Gorenstein involution (see Example \ref{weigh}).
In this case, $N=1$, $s=10$, $H^3=1$, $H \cdot c_2 = 34$ and $e = -288$.

If the Fano index is one, such \Q-Fano threefolds are called Fano--Enriques threefolds and classified in \cite{Ba, Sa1}. There are four families of them.
Their  Calabi--Yau double coverings were studied in \cite{Lee3}. It is notable that those Calabi--Yau double coverings are not simply-connected. Their invariants are listed in Table \ref{t1-2}. Two of those \Q-Fano threefolds have the same Calabi--Yau threefold as their double coverings and that is why Table \ref{t1-2} has only three entries (not four).

\begin{center}
\begin{longtable}{ccccl|l}
\caption{Invariant of $X$'s when $Y$'s are Fano--Enriques threefolds }
\label{t1-2} \\
\hline
$N$ & $s$ & $H^3$ & $H \cdot c_2$ & $e$            & references\\
\hline
\endfirsthead
\multicolumn{4}{c}%
{\tablename\ \thetable\ -- \textit{Continued from previous page}} \\
\hline
 $N$ & $s$ & $H^3$ & $H \cdot c_2$ & $e$            & references    \\
\hline
\endhead
\hline \multicolumn{4}{r}{\textit{Continued on next page}} \\
\endfoot
\hline
\endlastfoot
8 & 2 & 4                   & 28     & -88          & \cite{Sa1, Lee3}                             \\
8 & 2 & 8                   & 32     & -64          & \cite{Sa1, Lee3}                             \\
8 & 4 & 2                   & 20     & -72          & \cite{Sa1, Lee3}                             \\
\end{longtable}
\end{center}

If the Fano index is less than one, then it is equal to $\frac{1}{2}$. In this case,  a classification was obtained in \cite{Ta}, with respect to the  invariants $N$, $s$, $H^3$ and  $H \cdot c_2$.
The Calabi--Yau double coverings of those \Q-Fano threefolds were studied in \cite{Lee1, Lee2}. We remark that  the possible Euler characteristics are not fully determined.
 Their invariants are listed in Table \ref{t1-3}.

\medskip

\begin{center}
\begin{longtable}{ccccl|l}
\caption{Invariant of $X$'s when $Y$'s are \Q-Fano threefolds of index less than one }
\label{t1-3} \\
\hline
$N$ & $s$ & $H^3$ & $H \cdot c_2$ & $e$            & references\\
\hline
\endfirsthead
\multicolumn{4}{c}%
{\tablename\ \thetable\ -- \textit{Continued from previous page}} \\
\hline
 $N$ & $s$ & $H^3$ & $H \cdot c_2$ & $e$            & references    \\
\hline
\endhead
\hline \multicolumn{4}{r}{\textit{Continued on next page}} \\
\endfoot
\hline
\endlastfoot
1 & 2 & 5                   & 50     & -200*        & \cite{Ta}                             \\
1 & 2 & 9                   & 54     & -144*        & \cite{Ta}                             \\
1 & 2 & 13                  & 58     &              & \cite{Ta}                             \\
1 & 2 & 17                  & 62     & -108*        & \cite{Ta, Lee1}                       \\
1 & 2 & 21                  & 66     & -100*, -104* & \cite{Ta, Lee1}                       \\
1 & 2 & 25                  & 70     & -100*        & \cite{Ta, Lee1}                       \\
1 & 2 & 29                  & 74     & -96*         & \cite{Ta, Lee1}                       \\
2 & 2 & 6                   & 48     & -156*        & \cite{Ta}                             \\
2 & 2 & 10                  & 52     &              & \cite{Ta}                             \\
2 & 2 & 14                  & 56     & -96*, -100*  & \cite{Ta, Lee1}                       \\
2 & 2 & 18                  & 60     & -84*, -92*   & \cite{Ta, Lee1}                       \\
2 & 2 & 22                  & 64     & -92*         & \cite{Ta, Lee1}                       \\
2 & 2 & 30                  & 72     & -95*         & \cite{Ta, Lee1}                       \\
3 & 2 & 7                   & 46     &              & \cite{Ta}                             \\
3 & 2 & 11                  & 50     &              & \cite{Ta}                             \\
3 & 2 & 15                  & 54     & -76*, -84*   & \cite{Ta, Lee1}                       \\
3 & 2 & 19                  & 58     & -76*         & \cite{Ta, Lee1}                       \\
3 & 2 & 23                  & 62     &              & \cite{Ta}                             \\
4 & 2 & 8                   & 44     &  -88*            & \cite{Ta, Lee1, Lee2}            \\
4 & 2 & 12                  & 48     & -68*         & \cite{Ta, Lee1}                       \\
4 & 2 & 16                  & 52     & -72*         & \cite{Ta, Lee1}                       \\
5 & 2 & 9                   & 42     &              & \cite{Ta}                             \\
5 & 2 & 13                  & 46     &              & \cite{Ta}                             \\
5 & 2 & 17                  & 50     &              & \cite{Ta}                             \\
6 & 2 & 10                  & 40     &              & \cite{Ta}                             \\
6 & 2 & 14                  & 44     &              & \cite{Ta}                             \\
6 & 2 & 18                  & 48     &              & \cite{Ta}                             \\
7 & 2 & 15                  & 42     &              & \cite{Ta}                             \\
7 & 2 & 19                  & 46     &              & \cite{Ta}                             \\
\end{longtable}
\end{center}

\begin{remark} (for Table \ref{t1-3}). \label{r1}
\begin{enumerate}
\item  If  $e$ is given with `*' mark, there is an example of a Calabi--Yau threefold with  those invariants and it is possible that there may be other examples with different   topological Euler characteristics (but the other invariants are the same). Hence, in this case, the Euler characteristics are not fully determined.
\item If  $e$ is missing, no examples are known yet.
\end{enumerate}
\end{remark}

We have bounds for all Calabi--Yau threefolds  of Picard number one with non-Gorenstein involutions  whose fixed locus is not zero-dimensional:
$$ 1 \le H^3 \le 44,$$
$$ 20 \le H \cdot c_2 \le 92,$$
$$ 0 \le  N  \le 8,$$
and
$$ 2 \le s \le 10.$$
We can see from Tables \ref{t1-1}, \ref{t1-2} and \ref{t1-3} that the bounds are sharp. The bound for topological Euler characteristics has not been determined yet.

The author is very thankful to the referees for making several valuable
suggestions for the initial draft of this note. This work was supported by the National Research Foundation of Korea(NRF) grant funded by the Korea government(MSIT) (No.\ 2020R1A2C1A01004503) and 2021 Hongik University Research Fund.


\end{document}